\newcounter{ALC@unique}
\newcommand{\fa}{\tilde{f}}
\newcommand{\costidx}{0}
\newcommand{\confdyn}{C}
\newcommand{\RAOCP}{DR-OCP}
\newcommand{\nconst}{n_g}
\newcommand{\rv}{\xi}
\newcommand{\DR}[1]{\hat{#1}}
\newcommand{\fc}{\fa^{\DR{\law}_\hor}}
\newcommand{\nbeta}{n_{\beta}}
\newcommand{\augset}{\mathcal{Z}}
\newcommandx{\Vht}[2][2=\hor]{\DR{\cost}_{#2}^{(#1)}}  
\newcommandx{\Vtt}[2][2=\hor]{\tilde{\cost}_{#2}^{(#1)}} 
\newcommandx{\Vtil}[1][1=\hor]{\tilde{\cost}_{#1}}
\newif\ifDraft
\newif\ifExtraSteps
\newif\ifJournal
\newif\ifArxiv
\newcommand{\extrastep}[1]{
\ifExtraSteps
{\color{ForestGreen!60!black} #1}
\else\fi
} 
\title{\LARGE \bf
Learning-Based Distributionally Robust Model Predictive Control of Markovian Switching Systems with Guaranteed Stability and Recursive Feasibility
}
\author{Mathijs~Schuurmans and Panagiotis~Patrinos
  \thanks{M. Schuurmans and P. Patrinos are with the Department 
  of Electrical Engineering (\textsc{esat-stadius}), KU Leuven, 
  Kasteelpark Arenberg 10, 3001 Leuven, Belgium.
  Email: \texttt{\{mathijs.schuurmans, panos.patrinos\}@esat.kuleuven.be}}
  \thanks{This work was supported by: FWO projects: No. G086318N; No. G086518N; 
  Fonds de la Recherche Scientifique -- FNRS, the Fonds Wetenschappelijk Onderzoek--Vlaanderen under EOS Project No. 30468160 (SeLMA), 
  Research Council KU Leuven C1 project No. C14/18/068 and the 
  Ford--KU Leuven Research Alliance project No. KUL0023.}
}
\begin{document}
\maketitle
\thispagestyle{empty}
\pagestyle{empty}

\begin{abstract}
We present a data-driven model predictive control scheme for chance-constrained Markovian switching systems with unknown switching probabilities.
Using samples of the underlying Markov chain, ambiguity sets of transition probabilities are estimated which include the true conditional probability distributions with high probability. These sets are updated online and used to formulate a time-varying, risk-averse optimal control problem. We prove recursive feasibility of the resulting MPC scheme and show that the original chance constraints remain satisfied at every time step. Furthermore, we show that under sufficient decrease of the confidence levels, the resulting MPC scheme renders the closed-loop system mean-square stable with respect to the true-but-unknown distributions, while remaining less conservative than a fully robust approach.\end{abstract}

\section{Introduction}

Stochasticity is inherent to processes emerging in a multitude of applications.
Nevertheless, we are often required to performantly control such systems, using only the limited information that is available. 
Over the past decades, the decreasing cost of data acquisition, transmission and storage has caused a surge in research interest in data-driven approaches towards control. More recently, as the focus in research is gradually shifting towards real-life, safety-critical applications, there has been an increasing concern for safety guarantees of such data-driven methods, which are valid in a finite-data regime (see \cite{hewing_learning-based_2020} for a recent survey).

In this paper, we focus on safe, learning-based control of Markovian switching systems. Control of this class of systems is widely studied and has been used to model systems stemming from a wide range of applications \cite{costa_discrete-time_2005,patrinos_StochasticModelPredictive_2014,
bernardini_ScenariobasedModelPredictive_2009}.

Many of these approaches, however, require knowledge of the transition kernel governing the switching behavior of these systems, although in practice, such information is typically unavailable. Although recently, data-driven methods have been proposed to address this problem  \cite{beirigo_online_2018,he_reinforcement_2019}, relatively little attention has gone to providing \emph{a priori} guarantees on stability and constraint satisfaction. 

Leveraging the framework of risk-averse \ac{MPC} \cite{sopasakis_RiskaverseModelPredictive_2019,sopasakis_risk-averse_2019c, singh_framework_2018} and its connection to \ac{DR} optimization \cite{rahimian_distributionally_2019}, we propose a learning-based \ac{DR} \ac{MPC} scheme, which is provably stable, recursively feasible and uses data gathered during operation to improve performance, rendering it less conservative than more traditional robust approaches \cite{ben-tal_RobustOptimization_2009}. Despite its growing popularity
\cite{yang_WassersteinDistributionallyRobust_2018,mohajerinesfahani_DatadrivenDistributionallyRobust_2018,vanparys_DistributionallyRobustControl_2015b, 
ugurlu_RobustOptimalControl_2018,singh_framework_2018
}, 
this approach has remained relatively unexplored for chance-constrained stochastic optimal control problems with ambiguity in the estimation of conditional distributions, in particular for Markovian switching dynamics.

We summarize our contributions as follows.
\begin{inlinelist} 
   \item We propose a general data-driven, \ac{DR} \ac{MPC} scheme for Markov switching systems with unknown transition probabilities, which is compatible with the recently developed framework of risk-averse \ac{MPC} \cite{sopasakis_RiskaverseModelPredictive_2019,sopasakis_risk-averse_2019c}. The resulting closed-loop system satisfies the (chance) constraints of the original stochastic problem and allows for online improvement of performance based on observed data.
   \item We state the problem in terms of an augmented state vector of constant dimension, which summarizes the available information at every time. This
   \ifJournal idea, which is closely related to that of \emph{sufficient statistics} \cite[Ch. 5]{bertsekas_DynamicProgrammingOptimal_2005} and \emph{belief states} in partially-observed Markov decision processes \cite{krishnamurthy_PartiallyObservedMarkov_2016}
   \fi allows us to formulate the otherwise time-varying optimal control problem as a dynamic programming recursion. 
   \item We provide sufficient conditions for recursive feasibility
   and mean-square stability of the proposed \ac{MPC} scheme, with respect to the true-but-unknown probability distributions.
\end{inlinelist}   
\subsection{Notation}
Let $\N$ denote the set of natural numbers and $\N_{>0} \dfn \N \setminus 0$. For two naturals $a,b \in \N, a \leq b$, we denote $\natseq{a}{b} \dfn \{ n \in \N \mid a \leq n \leq b \}$ and similarly, we introduce the shorthand $\seq{\md}{a}{b} \dfn (\md_{t})_{t=a}^{b}$ to denote a sequence of variables. We denote the extended real line by $\barre \dfn \Re \cup \{\pm \infty\}$ and the set of \emph{nonnegative} (extended) real numbers by $\Re_+$ (and $\barre_+$). The cardinality of a (finite) set $\W$ is denoted by $|\W|$. We write $f: X \rightrightarrows Y$ to denote that $f$ 
is a \emph{set-valued mapping} from $X$ to $Y$. 
Given a matrix $P \in \Re^{n\times m}$, we denote its $(i,j)$'th element by $\elem{P}{i}{j}$ and its $i$'th row as $\row{P}{i} \in \Re^m$. The $i$'th element of a vector $x$ is denoted $\idx{x}{i}$ whenever confusion with time indices is possible. 
$\vect(M)$ denotes the vertical concatenation of the columns of a matrix $M$. We denote the vector in $\Re^k$ with all elements one as $\1_k \dfn (1)_{i=1}^{k}$ and the probability simplex of dimension $k$ as $\simplex_k \dfn \{ p \in \Re_+^{k} \mid \trans{p} \1_k = 1\}$. We define the indicator function as $1_{x=y} = 1$ if $x=y$ and $0$ otherwise. 
Similarly, the characteristic function $\delta_{\Xfeas}: \Re^{n} \rightarrow \barre$ of a set $\Xfeas \subseteq \Re^{n}$ is defined by $\delta_{\Xfeas}(x) = 0$ if $x \in \Xfeas$ and $\infty$ otherwise.

\section{Problem statement}
We consider discrete time Markovian switching systems with dynamics of the form
\begin{equation} \label{eq:system-dynamics}
    x_{t+1} = f(x_t, u_t, \md_{t+1}), 
\end{equation}
where $x_t \in \Re^{\ns}, u_t \in \Re^{\na}$ are the state and control action at time $t$, respectively,
and $\md_{t+1}: \Omega \rightarrow \W$ is a random variable drawn from a discrete-time, time-homogeneous Markov chain $\w \dfn (\md_{t})_{t \in \N}$  defined on a probability space $(\Omega, \F, \prob)$ and taking values on $\W \dfn \natseq{1}{\nModes}$. We refer to $ \md_t$ as the \emph{mode} of the Markov chain at time $t$. The \emph{transition kernel} governing the Markov chain is denoted by $\transmat = (\elem{\transmat}{i}{j})_{i,j \in \W}$, where $\elem{\transmat}{i}{j} = \prob[\md_{t} = j \mid  \md_{t-1} = i]$.
\ifJournal As such, the probability space can be constructed by taking the sample space $\Omega = \W^{\infty}$, $\F = 2^\Omega$ and for any $( \md_t)_{t\in\N} \in \Omega$, $\prob[( \md_t)_{t\in\N}]= p_0 \prod_{t=0}^{\infty}\elem{\transmat}{ \md_t}{\md_{t+1}}$, where $p_0 \in \simplex_{\nModes}$ is the initial distribution. For simplicity, we will assume that the initial mode is known to be $\md$, so $p_0=(1_{i=\md})_{i\in \W}$.
\fi
We assume that the state $x_t$ and mode $ \md_t$ are observable at time $t$.
For a given state-mode pair $(x, \md) \in \Re^{\ns} \times \W$, we constrain the control action $u$ to the set $\Ufeas(x, \md)$, defined
\ifJournal
through $\nconst$ chance constraints of the form
\else
as
\fi
\begin{multline} \label{eq:chance-constraint}
\Ufeas(x, \md) \dfn \{ u \in \Re^{\na}:\\
    \prob[g_{i}(x,u,\md,\mdnxt) > 0 \mid x,\md] \leq \alpha_i, \forall i \in \natseq{1}{\nconst}\},
\end{multline}
where $\mdnxt \sim \row{\transmat}{\md}$ is randomly drawn from the 
Markov chain $\w$ in mode $w$, and  
$g_i: \Re^{\ns} \times \Re^{\na} \times \W^2 \rightarrow \Re$ are constraint functions with corresponding constraint violation rates $\alpha_i$. By appropriate choices of $\alpha_i$ and $g_i$, constraint \eqref{eq:chance-constraint} can be used to encode robust constraints ($\alpha_i = 0$) or chance constraints ($0<\alpha_i<1$) on the state, the control action, or both. 
Note that the formulation \eqref{eq:chance-constraint} additionally covers chance constraints on the successor state $f(x,u,\mdnxt)$ under input $u$, conditioned on the current values $x$ and $\md$.

Ideally, our goal is to synthesize -- by means of a stochastic \ac{MPC} scheme -- a stabilizing control law $\law_{\hor}: \Re^{\ns} \times \W \rightarrow \Re^{\na}$, such that for the closed loop 
system \(x_{t+1} = f(x_t, \law_{\hor}(x_t,  \md_t), \md_{t+1})\), 
it holds almost surely (\as) that $\law_{\hor}(x_t, \md_t) \in \Ufeas(x_t, \md_t)$, for all $t \in \N$. Consider a sequence of $\hor$ control laws $\pol = (\pol_{k})_{k=0}^{\hor-1}$, referred to as a \emph{policy} of length $\hor$.
Given a stage cost $\ell:\Re^{\ns} \times \Re^{\na} \times \W \rightarrow \Re_{+}$, and a terminal cost $\Vf: \Re^{\ns} \times \W \rightarrow \Re_{+}$ and corresponding terminal set $\Xf$: $\bar{\Vf}(x,w) \dfn \Vf(x, \md) + \delta_{\Xf}(x)$, we can assign to each such policy $\pol$, a cost
\begin{equation} \label{eq:cost-function-stochastic}
    \cost_{\hor}^{\pol}(x, \md) \dfn \textstyle\E\big[\sum_{k=0}^{\hor-1} \ell(x_k, u_k,\md_k) + \bar{\Vf}(x_\hor,\md_\hor) \big], 
\end{equation}
where $x_{k+1} = f(x_k, u_k, \md_{k+1})$, $u_k = \pol_k(x_k, \md_k)$ and $(x_0, \md_0) = (x,\md)$, for $k \in \natseq{0}{\hor-1}$.
This defines the following stochastic \ac{OCP}. 

\begin{definition}[Stochastic \ac{OCP}]
For a given state-mode pair $(x, \md)$, the optimal cost of the stochastic \ac{OCP} is 
\begin{subequations} \label{eq:stochastic-OCP}
\begin{align}
    \cost_{\hor}(x, \md) = \min_{\pol} \cost_{\hor}^{\pol}(x, \md)
\end{align}
subject to 
\begin{align} \label{eq:stochastic-constraints}
    x_0&=x, \md_0=\md, \pol = (\pol_{k})_{k=0}^{\hor-1}, \\
    x_{k+1} &= f(x_k, \pol_{k}(x_k,\md_k), \md_{k+1}),\\
    \pol_{k}(x_k, \md_k) &\in \Ufeas(x_k ,\md_k), 
    \; \forall k \in \natseq{0}{\hor-1}.
\end{align}
\end{subequations}
We denote by $\Pi_{\hor}(x, \md)$ the corresponding set of minimizers.
\end{definition}

Let $(\pol^{\star}_{k}(x, \md))_{k=0}^{\hor-1} \in \Pi_{\hor}(x, \md)$, so that the stochastic \ac{MPC} control law is given by $\law_{\hor}(x, \md) = \pol^{\star}_0(x, \md)$. 
Sufficient conditions on the terminal cost $\bar{\Vf}$ and its effective domain $\dom \bar{\Vf} = \Xf$ to ensure mean-square stability of the closed-loop system, have been studied for a similar problem set-up in \cite{patrinos_StochasticModelPredictive_2014}, among others.

Both designing and computing such a stochastic \ac{MPC} law requires knowledge of the probability distribution governing the state dynamics \eqref{eq:system-dynamics}, or equivalently, of the transition kernel $\transmat$. 
In reality, $\transmat$ is typically not known but rather estimated from a finitely-sized sequence $\seq{\md}{0}{t}$ of observed values. Therefore, they are subject to some level of misestimation, commonly referred to as \emph{ambiguity}. 
The goal of the proposed \ac{MPC} scheme is to model this  
ambiguity explicitly, in order to be resilient against it, without 
disregarding available statistical information.
To do so, we introduce the notion of a \textit{learner state}, 
which is very similar in spirit to the concept of a \emph{belief state}, commonly used in control of partially observed Markov decision processes \cite{krishnamurthy_PartiallyObservedMarkov_2016}. It can be regarded as an internal state of the controller that stores all the information required to build a set of possible conditional distributions over the next state, given the observed data. We formalize this in the following 
assumption.

\begin{assumption}[Learning system] \label{assum:learner}
Given a sequence $\seq{\md}{0}{t}$ sampled from the Markov chain $\w$, we can compute \begin{inlinelist} \item a statistic $\lrn_t: \W^{t+1} \rightarrow \lrnset \subseteq \Re^{\nl}$, accompanied by a vector of confidence parameters $\beta_t  \in \I \dfn [0,1]^{\nbeta}$, which admit recursive update rules $\lrn_{t+1} = \learner(\lrn_t, \md_t, \md_{t+1})$ and $\beta_{t+1} = \confdyn(\beta_t)$, $t \in \N$; and \item an \emph{ambiguity set} $\amb: \lrnset \times \W \times [0,1] \rightrightarrows \simplex_{\nModes}: (\lrn, \md, \beta) \mapsto \amb_{\beta}(\lrn, \md)$\end{inlinelist}, mapping $\lrn_t$, $ \md_t$ and an element $\idx{\beta_{t}}{i}$ to a convex subset of the $\nModes$-dimensional probability simplex $\simplex_\nModes$ such that for all $t \in \N$,
    \begin{equation} \label{eq:high-confidence}
        \prob[\row{\transmat}{\md_{t}} \in \amb_{\idx{\beta_{t}}{i}}(\lrn_t,  \md_t)] \geq 1 - \idx{\beta_{t}}{i}.
    \end{equation} 
    We will refer to $\lrn_t$ and $\beta_t$ as the state of the learner and the confidence vector at time $t$, respectively.
\end{assumption}
\begin{remark}[confidence levels]
    Two points of clarification are in order. 
    First, we consider a vector of confidence levels, rather 
    than a single value. This is motivated by the fact that one would often wish to assign separate confidence levels to ambiguity sets 
    corresponding to the $\nconst$ individual chance constraints as well as the cost function of the data-driven \ac{OCP} (defined in \Cref{def:raocp} below). Accordingly, we will assume that $n_{\beta} = \nconst + 1$. 
    
    Second, the confidence levels are completely exogenous to the system dynamics and can in principle be chosen to be any time-varying sequence satisfying the technical conditions discussed further (see \Cref{prop:conditions-confidences} and \Cref{assum:confidences}).
    The requirement that the sequence $(\beta_{t})_{t\in\N}$ can be written as the trajectory of a time-invariant dynamical system
    serves to facilitate theoretical analysis of the proposed 
    scheme through dynamic programming. Meanwhile, it covers
    a large class of sequences one may reasonably choose, as illustrated in \Cref{ex:example-ambiguity-choice}.
\end{remark}
We will additionally invoke the following assumption on the confidence levels when appropriate.
\begin{assumption} \label{assum:confidences}
    The confidence dynamics $\beta_{t+1} = \confdyn(\beta_t)$ is chosen such that 
    \(
        \sum_{t=0}^{\infty} \idx{\beta_{t}}{i} < \infty, \forall i \in \natseq{1}{\nbeta}.
    \)    
\end{assumption}
In other words, we will assume that the probability of obtaining an
ambiguity set that contains the true conditional distribution (expressed by \eqref{eq:high-confidence}) increases sufficiently fast. This 
assumption will be of crucial importance in showing stability (see \Cref{sec:stability}).

To fix ideas, consider the following example of a learning system satisfying the requirements of \Cref{assum:learner}. 

\begin{example}[Transition counts and $\ell_1$-ambiguity] \label{ex:example-ambiguity-choice}
    A natural choice for the learner state is to take  $\lrn_t = \vect(\nsample(t))$,
    where $\nsample(t) = (\elem{\nsample}{\md}{\mdnxt}(t))_{\md,\mdnxt \in \W} \in \N^{\nModes \times \nModes}$ contains the mode transition counts at time $t$. That is, 
    $\elem{\nsample}{\md}{\mdnxt}(t) = |\{\tau \in \natseq{1}{t} : \md_{\tau-1}=\md, \md_{\tau} = \mdnxt\}|$, for all $\md,\mdnxt \in \W$. 
    \ifJournal
    Thus, since, 
    \[ 
        \begin{aligned}
            \elem{\nsample}{\md}{\mdnxt}(t+1) = \begin{cases}
                \elem{\nsample}{\md}{\mdnxt}(t) +1 &\text{if } \md_t = \md, \md_{t+1}=\mdnxt\\
                \elem{\nsample}{\md}{\mdnxt}(t) &\text{otherwise,}
            \end{cases}
        \end{aligned}
    \]
    it 
    \else
    It
    \fi
    is clear that we can indeed write $s_{t+1} = \learner(s_t,  \md_t, \md_{t+1})$. Furthermore, following \cite{schuurmans_SafeLearningBasedControl_2019},
    we can uniquely obtain ambiguity sets parametrized as
    \[
        \ambTV_{\idx{\beta_{t}}{i}}(s_t, \md_t) \dfn \{p \in \simplex_\nModes : \|p - \hat{p}_{ \md_t} \|_1 \leq r_{ \md_t}(s_t, \idx{\beta_{t}}{i}) \}, 
    \]
    for $i \in \natseq{1}{\nbeta}$, 
    where $\hat{p}_{ \md_t}$ is the empirical estimate of the $ \md_t$'th row of the transition kernel (initialized to the uniform distribution if no transitions originating in mode $ \md_t$ have been observed) and the radii $r_{ \md_t}(s_t, \idx{\beta_{t}}{i})$ are chosen to satisfy \eqref{eq:high-confidence} by means of basic concentration inequalities, yielding a closed-loop expression $\bigo(\sqrt{- \row{\nsample}{\md_t}(t)^{-1} \ln(\beta_t)}),$ with $\nsample_{\md_t} = \tsum_{\mdnxt \in \W} \elem{\nsample}{\md_t}{\mdnxt}$.
    \ifJournal Both of these quantities depend solely on the learner state $s_t$, the confidence level $\beta_t$ and the 
    current mode $ \md_t$. \fi
    Finally, the sequence of the confidence levels $\beta_t$ remains to be selected. One particular family of sequences satisfying
    the additional requirement of \Cref{assum:confidences} is
    \ifJournal $\beta_t = b e^{-\tau t}$, with some arbitrarily chosen parameters $0 \le b \le 1$ and $\tau > 0$. Alternatively, if a slower decay rate is preferred, one may choose a sequence of the form 
    \(
        \beta_t = b (1+t)^{-q}, 
    \) with parameters $0 \leq b \leq 1$, $q > 1$. The former can trivially be identified with a stable linear system; the latter can be described by the recursion 
    \(
        \beta_{t+1} = b \beta_t {(\beta_t^{\nicefrac{1}{q}} + b^{\nicefrac{1}{q}})^{-q}},\, \beta_0 = b,
    \)
    both satisfying the requirements of \Cref{assum:learner}.
    \else 
    \(
        \beta_t = b (1+t)^{-q}, 
    \) with parameters $0 \leq b \leq 1$, $q > 1$. This sequence can be described by the recursion 
    \(
        \beta_{t+1} = b \beta_t {(\beta_t^{\nicefrac{1}{q}} + b^{\nicefrac{1}{q}})^{-q}},\, \beta_0 = b,
    \)
    satisfying the requirements of \Cref{assum:learner}. Additionally, 
    it allows the radius of the ambiguity set to converge to zero with the corresponding sample size. 
    \fi 
\end{example}

Equipped with a generic learning system of this form,
our aim is to find a data-driven approximation to the stochastic \ac{OCP} defined by \eqref{eq:stochastic-OCP}, which asymptotically attains the optimal cost while preserving stability and constraint satisfaction during closed-loop operation.
\section{Data-driven model predictive control} \label{sec:data-driven-MPC}

Given a learning system satisfying \Cref{assum:learner}, we define the augmented state $z_t = (x_t, \lrn_t, \beta_t, \md_t) \in \augset \dfn \Re^{\ns} \times \lrnset \times \I \times \W$, which evolves over time according to the dynamics
\begin{equation} \label{eq:augmented-dynamics}
    z_{t+1} = \fa(z_t, u_t, \md_{t+1}) \dfn \smallmat{f(x_t, u_t, \md_{t+1})\\ \learner(\lrn_t, \md_t, \md_{t+1})\\\confdyn(\beta_t)\\ \md_{t+1}},
\end{equation}
where $\md_{t+1} \sim \row{\transmat}{\md_t}$, for $t \in \N$.
Consequently, our scheme will result in a feedback law $\law: \augset \rightarrow \Re^{\na}$. To this end, 
we will formulate a \ac{DR} counterpart to the stochastic \ac{OCP} \eqref{eq:stochastic-OCP}, in which the expectation operator in the cost and the conditional probabilities in the constraint will be replaced by operators that account for ambiguity in the involved distributions. 

\subsection{Ambiguity and risk}
In order to reformulate the cost function \eqref{eq:cost-function-stochastic}, 
we first introduce an ambiguous conditional expectation operator, leading to a formulation akin to the Markovian risk measures utilized in \cite{sopasakis_RiskaverseModelPredictive_2019,ruszczynski_RiskaverseDynamicProgramming_2010b}. 
Consider a function $\rv: \augset \times \W \rightarrow \barre$, defining a stochastic process $(\rv_t)_{t \in \N} = (\rv(z_t, \md_{t+1}))_{t\in\N}$ on $(\Omega, \F, \prob)$, and suppose that 
the augmented state $z_t = z = (x, \lrn, \beta, \md)$ is given. For simplicity, let us assume for the moment that $\beta \in [0,1]$ is scalar; the following definition can be repeated for each component in the general case. The ambiguous conditional expectation of $\rv(z, \mdnxt)$, given $z$ is
\begin{equation} \label{eq:dr-expectation}
    \begin{aligned}
       \lrisk{\lrn}{\md}{\beta}[\rv(z,\mdnxt)] &\dfn \max_{p \in \amb_{\beta}(\lrn, \md)} \E_{p}[\rv(z,\mdnxt) | z] \\
        &= \max_{p \in \amb_{\beta}(\lrn, \md)} \tsum_{\mdnxt \in \W} p_\mdnxt \rv(z,\mdnxt).
    \end{aligned}
\end{equation}
Trivially, it holds that if the $w$'th row of the transition matrix lies in the corresponding ambiguity set: $\row{\transmat}{\md} \in \amb_{\beta}(\lrn,\md)$, then
\begin{equation*} 
    \begin{aligned}
    \lrisk{\lrn}{\md}{\beta}[\rv(z,\mdnxt)] &\geq \E_{\row{\transmat}{\md}} [\rv(z,\mdnxt)|z] 
    = \textstyle \sum_{\mdnxt \in \W} \elem{\transmat}{\md}{\mdnxt} \rv(z,\mdnxt).
    \end{aligned}
\end{equation*}
Note that the function $\lrisk{\lrn}{\md}{\beta}$ defines a coherent risk measure \cite[Sec. 6.3]{shapiro2009lectures}. We say that $\lrisk{\lrn}{\md}{\beta}$ is the risk measure \emph{induced by} the ambiguity set $\amb_{\beta}(\lrn,\md)$. 

A similar construction can be carried out for the chance constraints \eqref{eq:chance-constraint}. 
In their standard form, chance constraints lead to nonconvex, nonsmooth constraints. For this reason, they are commonly approximated using risk measures \cite{nemirovski2012safe}.
Particularly, the (conditional) \emph{average value-at-risk} (at level $\DR{\alpha} \in (0, 1]$ and with reference distribution $p \in \simplex_\nModes$) of $\rv$ is the coherent risk measure 
\begin{equation} 
    \begin{aligned}
        \label{eq:def-AVAR}
            &\AVAR_{\DR{\alpha}}^{p} [\rv(z,\mdnxt) \mid z]  
    \\ 
        =&\begin{cases}
            \min\limits_{t\in\Re} 
                t+\nicefrac{1}{\DR{\alpha}}\E_{p}\plus{\rv(z,\mdnxt)-t \mid z},&\DR{\alpha}\neq 0\\
            \max_{\mdnxt \in \W}[\rv(z,\mdnxt)],
                                & \DR{\alpha}=0.
        \end{cases}
    \end{aligned}
\end{equation} 
It can be shown (e.g., \cite[sec. 6.2.4]{shapiro2009lectures}) that if $p = \row{\transmat}{\md}$, then the following implication holds tightly
\begin{equation} \label{eq:risk-constraint-implication} 
    \begin{aligned}
    &\AVAR^p_{\DR{\alpha}}[\rv(z,\mdnxt) {\mid} z] {\leq} 0
    \Rightarrow \prob[\rv(z,\mdnxt) \leq 0 {\mid} z] \geq 1-\DR{\alpha}.
    \end{aligned}
\end{equation}
By exploiting the dual risk representation \cite[Thm 6.5]{shapiro2009lectures}, the left-hand inequality in \eqref{eq:risk-constraint-implication} can be formulated in terms of only linear constraints \cite{sopasakis_risk-averse_2019c}. As such, it can be used as a tractable surrogate for the original chance constraints, given perfect probabilistic information.
Accounting also for the ambiguity in the knowledge of $\row{\transmat}{\md}$ through $\amb_{\beta}(\lrn, \md)$, we define 
\begin{equation} \label{eq:ambiguous-chance-constraint}
    \lriskc{\lrn}{\md}{\beta}{\DR{\alpha}}[\rv(z,\mdnxt)] \dfn \hspace{-6pt} \max_{p \in \amb_{\beta}(\lrn,\md)} \hspace{-6pt} \AVAR^p_{\DR{\alpha}}[\rv(z,\mdnxt) \mid z] \leq 0.
\end{equation}
The function $\lriskc{\lrn}{\md}{\beta}{\DR{\alpha}}$ in turn defines a coherent risk measure.

We now present a condition on the choice of $\DR{\alpha}$ under which a constraint of the form \eqref{eq:ambiguous-chance-constraint} can be used as a tractable and safe approximation of  
a chance constraint when there is ambiguity in the probability distribution.

\begin{proposition} \label{prop:conditions-confidences}
    Let $\beta, \alpha \in [0,1]$, be given values with $\beta < \alpha$.
    Consider the random variable $\lrn: \Omega \rightarrow \lrnset$, denoting an (a priori unknown) learner state satisfying $\Cref{assum:learner}$,
    i.e., $\prob[\row{\transmat}{\md} \in \amb_{\beta}(s,\md)] \geq 1 - \beta$.
    If the parameter $\DR{\alpha}$ is chosen to satisfy 
    \(
        0 \leq \hat{\alpha} \leq \frac{\alpha - \beta}{1 - \beta} \leq 1, 
    \)
    then, for an arbitrary function $g : \augset \times \W \rightarrow \barre$, the following implication holds:
    \begin{equation} \label{eq:implication-prop3-1}
        \begin{aligned}
        \lriskc{\lrn}{\md}{\beta}{\hat{\alpha}}[g(z,\mdnxt)] \leq 0, \, \as
        \Rightarrow
        \prob[g(z,\mdnxt) \leq 0 {\mid} x,\md] {\geq} 1- \alpha.
        \end{aligned}
    \end{equation}
\end{proposition}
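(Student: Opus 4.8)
The plan is to exploit the two quantitative facts encoded in the hypotheses. First, \Cref{assum:learner} guarantees that the true transition row $\row{\transmat}{\md}$ lies in the ambiguity set $\amb_{\beta}(\lrn,\md)$ with probability at least $1-\beta$; second, \emph{whenever} it does, the distributionally robust constraint in \eqref{eq:ambiguous-chance-constraint} collapses to an ordinary $\AVAR$ constraint under the \emph{true} distribution, which by \eqref{eq:risk-constraint-implication} already enforces a chance constraint at level $\hat{\alpha}$. Splicing together a constraint that holds at level $\hat{\alpha}$ on an event of probability $\ge 1-\beta$ then produces a chance constraint at level $\beta + (1-\beta)\hat{\alpha}$, and the admissible range $0 \le \hat{\alpha} \le \frac{\alpha-\beta}{1-\beta}$ is exactly what makes this not exceed $\alpha$.

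Concretely, I would work throughout under the conditional law $\prob[\cdot \mid x,\md]$ (the pair $(x,\md)$ being the known current values) and set $E \dfn \{\row{\transmat}{\md} \in \amb_{\beta}(\lrn,\md)\}$. Since $E$ depends only on $\lrn$ and $\md$, it is measurable with respect to the information $\sigma$-algebra $\mathcal{G}$ that renders $z=(x,\lrn,\beta,\md)$ known, and the hypothesis on $\lrn$ gives $\prob[E \mid x,\md] \ge 1-\beta$. On $E$, the point $p=\row{\transmat}{\md}$ is feasible for the maximization in \eqref{eq:ambiguous-chance-constraint}, so $\lriskc{\lrn}{\md}{\beta}{\hat{\alpha}}[g(z,\mdnxt)] \le 0$ forces $\AVAR^{\row{\transmat}{\md}}_{\hat{\alpha}}[g(z,\mdnxt)\mid z] \le 0$, and \eqref{eq:risk-constraint-implication} then yields $\prob[g(z,\mdnxt) > 0 \mid z] \le \hat{\alpha}$ pointwise on $E$. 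Because, conditionally on $\mathcal{G}$, the successor mode $\mdnxt$ still has law $\row{\transmat}{\md}$ (Markov property), the tower rule gives $\prob[g(z,\mdnxt) > 0 \mid \mathcal{G}] = \prob[g(z,\mdnxt) > 0 \mid z]$, whence
\[
\begin{aligned}
\prob[g(z,\mdnxt) > 0 \mid x,\md]
&= \E\big[\mathbf{1}_{E}\,\prob[g(z,\mdnxt) > 0 \mid \mathcal{G}] + \mathbf{1}_{E^c}\,\prob[g(z,\mdnxt) > 0 \mid \mathcal{G}] \,\big|\, x,\md\big]\\
&\le \hat{\alpha}\,\prob[E \mid x,\md] + \prob[E^c \mid x,\md].
\end{aligned}
\]
Finally, using $\prob[E^c \mid x,\md] \le \beta$, the identity $\prob[E\mid x,\md] = 1-\prob[E^c\mid x,\md]$, and $1-\hat{\alpha} \ge 0$, the right-hand side equals $1 - \prob[E\mid x,\md](1-\hat{\alpha}) \le 1-(1-\beta)(1-\hat{\alpha}) = \beta + (1-\beta)\hat{\alpha} \le \beta + (\alpha-\beta) = \alpha$, the last step being precisely the assumption $\hat{\alpha} \le \frac{\alpha-\beta}{1-\beta}$. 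Therefore $\prob[g(z,\mdnxt) \le 0 \mid x,\md] \ge 1-\alpha$, as claimed.

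The only delicate point is the bookkeeping of the conditioning: one must be careful that the ``$\as$'' qualifier in the hypothesis refers to the randomness of the learner state $\lrn$, that $E$ and the robust constraint are measurable with respect to the $\sigma$-algebra $\mathcal{G}$ on which $z$ is known, and that conditionally on $\mathcal{G}$ the mode $\mdnxt$ retains its law $\row{\transmat}{\md}$ so that \eqref{eq:risk-constraint-implication} applies verbatim on $E$. Everything else reduces to the scalar inequality $\beta + (1-\beta)\hat{\alpha} \le \alpha$; the boundary case $\hat{\alpha}=0$, where $\AVAR$ degenerates to the essential supremum, is covered by the same chain since \eqref{eq:risk-constraint-implication} continues to hold there.
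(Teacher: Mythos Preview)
Your proof is correct and follows essentially the same approach as the paper's: split on the event $E=\{\row{\transmat}{\md}\in\amb_{\beta}(\lrn,\md)\}$, use the $\AVAR$ implication \eqref{eq:risk-constraint-implication} on $E$ to obtain a chance constraint at level $\hat{\alpha}$, and then combine via the law of total probability to arrive at the scalar condition $(1-\hat{\alpha})(1-\beta)\ge 1-\alpha$. Your version is somewhat more careful about the measure-theoretic bookkeeping (the $\sigma$-algebra $\mathcal{G}$, the tower rule, and the Markov property ensuring $\mdnxt\sim\row{\transmat}{\md}$ conditionally on $\mathcal{G}$), which the paper leaves implicit, but the argument is the same.
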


\begin{proof}

    If \(
        \lriskc{\lrn}{\md}{\beta}{\hat{\alpha}}[g(z,\mdnxt) ] \leq 0 
    \), \as, then by \eqref{eq:risk-constraint-implication}-\eqref{eq:ambiguous-chance-constraint},
    \[
        \prob[g(z,\mdnxt) \leq 0 \mid x,\md,\row{\transmat}{\md} \in \amb_{\beta}(\lrn, \md)] \geq 1 - \DR{\alpha}, \as  
    \] Therefore,
    \[
        \begin{aligned}
            &\prob[g(z,\mdnxt) \leq 0 \mid x,\md] \\
            &\geq 
            \prob[g(z,\mdnxt) \leq 0 \mid x,\md,\row{\transmat}{\md} \in \amb_{\beta}(\lrn, \md)]
             \prob[\row{\transmat}{\md} \in \amb_{\beta}(\lrn, \md)] \\ 
            &\geq (1-\DR{\alpha})(1-\beta).
        \end{aligned}
    \]
    Requiring that $(1-\DR{\alpha})(1-\beta) \geq (1 - \alpha)$ then immediately yields the sought condition.
\end{proof}
    Notice that the implication \eqref{eq:implication-prop3-1} in \Cref{prop:conditions-confidences} provides an \textit{a priori} guarantee,
    since the learner state is considered to be random. 
    In other words, the statement is made before the data is revealed. 
    Indeed, for a \emph{given} learner state $\lrn$ and mode $\md$, the ambiguity set $\amb_{\beta}(\lrn,\md)$ is fixed and therefore, the outcome of the event $E = \{\row{\transmat}{\md} \in \amb_{\beta}(\lrn,\md)\}$ is determined. Whether \eqref{eq:implication-prop3-1} then holds for these fixed values depends on the outcome of $E$.
    This is naturally reflected through the above condition on $\DR{\alpha}$ which implies that $\DR{\alpha} \leq \alpha$, and thus tightens the chance constraints that are imposed for a \emph{fixed} $\lrn$. Hence, the possibility that for this particular $\lrn$, the ambiguity set may not include the conditional distribution, is accounted for. This effect can be mitigated by decreasing $\beta$, at the cost of a larger ambiguity set. A more detailed study of this trade-off is left for future work.

\subsection{Risk-averse optimal control}

We are now ready to describe the \ac{DR} counterpart to the OCP \eqref{eq:stochastic-OCP}, which, when solved in receding horizon fashion, yields the proposed data-driven MPC scheme. 

For a given augmented state $z = (x, \lrn, \beta, \md) \in \augset$, we use \eqref{eq:ambiguous-chance-constraint} to define the \ac{DR} set of feasible inputs $\DR{\Ufeas}(z)$ in correspondence to \eqref{eq:chance-constraint}.
Without loss of generality, let us assume that the $i$th entry $\idx{\beta}{i}$ in the confidence vector $\beta$ corresponds to the $i$th constraint, 
so that
\begin{equation} \label{eq:DR-constraints}
 \DR{\Ufeas}(z) {=} \{ u \mid 
    \lriskc{\lrn}{\md}{\idx{\beta}{i}}{\DR{\alpha}_i}[g_i (x,u,\md,\mdnxt)] \leq 0,\forall i \in \natseq{1}{\nconst}\}.
\end{equation}
\begin{remark}
The parameters $\DR{\alpha}_i$ remain to be chosen in relation to the confidence levels $\beta$ and the original violation rates $\alpha_i$. In light of \Cref{prop:conditions-confidences}, $\DR{\alpha}_i= \tfrac{\alpha_i - \idx{\beta}{i}}{1-\idx{\beta}{i}}$ yields the least conservative choice. This choice is valid as long as it is ensured that $\idx{\beta}{i} \leq \alpha_{i}$.
\end{remark}

Let us denote the remaining element of the confidence vector 
$\beta$ corresponding to the cost by $\idx{\beta}{\costidx}$.
Using \eqref{eq:dr-expectation}, we can then express the \ac{DR} cost of a policy $\pol = (\pol_{k})_{k=0}^{\hor-1}$ as
\begin{multline} \label{eq:cost-function-risk}
    \DR{\cost}_\hor^{\pol}(z) \dfn \ell(x_0, u_0, \md_0) +
    \lrisk{\lrn_0}{\md_0}{\idx{\beta_{0}}{\costidx}}
    \big[
        \ell(x_1, u_1,\md_1)\\ +
         \lrisk{\lrn_1}{\md_1}{\idx{\beta_{1}}{\costidx}}
        \big[
            \dots + \lrisk{\lrn_{\hor-2}}{\md_{\hor-2}}{\idx{\beta_{\hor-2}}{\costidx}}\big[
             \ell(x_{\hor-1},u_{\hor-1}, \md_{\hor-1})\\
             + \lrisk{\lrn_\hor}{\md_\hor}{\idx{\beta_{\hor}}{\costidx}}[ \DR{\Vf}(x_\hor, \lrn_\hor, \beta_{\hor}, \md_\hor) ] \big]
            \dots 
        \big]
    \big],
\end{multline}
where $z_0=z$, $z_{k+1} =\fa(z_k, u_k, \md_{k+1})$ and $u_{k} = \pol_{k}(z_{k})$, for all $k \in \natseq{0}{\hor-1}$. 
In \Cref{sec:theory}, conditions on the terminal cost $\DR{\Vf}: \augset \rightarrow \barre_+: (x, \lrn, \beta, \md) \mapsto \Vf(x,\md) + \delta_{\DR{\Xf}}(x,\lrn, \beta)$ and its domain are provided in order to guarantee recursive feasibility and stability of the \ac{MPC} scheme defined by the following \ac{OCP}.

\begin{definition}[\acs{\RAOCP}] \label{def:raocp}
Given an augmented state $z \in \augset$, the optimal cost of the \ac{\RAOCP} is 
\begin{subequations} \label{eq:risk-averse-OCP}
\begin{align}
    \DR{\cost}_{\hor}(z) = \min_{\pol} \DR{\cost}_{\hor}^{\pol}(z)
\end{align}
subject to 
\begin{align} \label{eq:risk-averse-constraints}
    (x_0, \lrn_0, \beta_0, \md_0) &= z,\, \pol = (\pol_{k})_{k=0}^{\hor-1}, \\
    z_{k+1} &= \fa(z_k, \pol_{k}(z_k), \md_{k+1}),\\
    \pol_{k}(z_k) &\in \DR{\Ufeas}(z_k), \label{eq:risk-constraints-OCP}
    \; \forall \seq{\md}{0}{k} \in \W^{k},
\end{align}
for all $k \in \natseq{0}{\hor-1}$.
\end{subequations}
We denote by $\DR{\Pi}_{\hor}(z)$ the corresponding set of minimizers.
\end{definition}

We thus define the data-driven \ac{MPC} law analogously to the stochastic case as 
\begin{equation} \label{eq:MPC-law-DR}
    \DR{\law}_{\hor}(z) = \DR{\pol}_{0}^{\star}(z),
\end{equation}
where $(\DR{\pol}_{k}^{\star}(z))_{k=0}^{\hor-1} \in \DR{\Pi}_{\hor}(z)$.
At every time $t$, the data-driven MPC scheme thus consists of 
\begin{inlinelist}
    \item computing a control action $u_t = \DR{\law}_\hor(z_t)$ and applying it to the system \eqref{eq:system-dynamics}; 
    \item observing the outcome of $\md_{t+1} \in \W$ and the corresponding next state $x_{t+1}=f(x_t,u_t,\md_{t+1})$; and
    \item updating the learner state $\lrn_{t+1} = \learner(\lrn_t, \md_t,\md_{t+1})$ and the confidence levels $\beta_{t+1} = \confdyn(\beta_t)$, gradually decreasing the size of the ambiguity sets.
\end{inlinelist}

\begin{remark}[Scenario tree representations]
    Since $\W$ is a finite set, the possible realizations of $\seq{\md}{0}{\hor}$, given $z_0$ can be enumerated such that 
    the corresponding predicted states and controls 
    can be represented on a scenario tree\ifJournal, which is a directed graph with a single root node, corresponding to $\md_0$. Each node is connected to a number of child nodes, representing the possible realizations of the stochastic process at the next time step\fi \cite{pflug_MultistageStochasticOptimization_2014}. It therefore suffices to optimize over a finite number (equal to the number of nodes in the tree) of control actions instead of infinite-dimensional control laws.
    When represented in this manner, it is apparent that \eqref{eq:risk-averse-OCP} falls within the class of \emph{risk-averse, risk-constrained optimal control problems}, described in \cite{sopasakis_risk-averse_2019c}.
    In particular, the constraints \eqref{eq:risk-constraints-OCP} at stage $k$ can be represented in the framework of \cite{sopasakis_risk-averse_2019c} as \emph{nested risk constraints} which are compositions of a set of conditional risk mappings. In this case, the composition consists of $k-1$ $\max$ operators over values on the nodes in the first stages and a conditional risk mapping based on \eqref{eq:ambiguous-chance-constraint} at stage $k$. This is in line with the observations of \cite[Sec. 7.1]{kouvaritakis_ModelPredictiveControl_2016}. 
    Consequently, if the risk measures 
    employed in the definition of the \RAOCP~\eqref{eq:risk-averse-OCP} belong to the broad family of conic risk measures and the dynamics $f(\argdot, \argdot, w)$, $w \in \W$ are linear,
    then, using the reformulations in \cite{sopasakis_risk-averse_2019c}, 
    \eqref{eq:risk-averse-OCP} can be cast as a convex conic optimization problem.
    This is the case for many commonly used coherent risk measures, including the risk measure induced by the $\ell_1$-ambiguity set discussed in \Cref{ex:example-ambiguity-choice} (see \cite{schuurmans_LearningBasedRiskAverseModel_2020} for a numerical 
    case study). For nonlinear dynamics, the problem is no longer convex but can in practice still be solved effectively with standard NLP solvers.
\end{remark}\section{Theoretical analysis} \label{sec:theory}
\subsection{Dynamic programming}
    To facilitate theoretical analysis of the proposed \ac{MPC} scheme, we represent \eqref{eq:risk-averse-OCP} as a dynamic programming recursion, similarly to \cite{sopasakis_RiskaverseModelPredictive_2019}.
    We define the Bellman operator $\T$ as 
    \(
        \T(\hat{\cost})(z) \dfn \hspace{-1pt} \min_{u \in \DR{\Ufeas}(z)} \hspace{-3pt} \ell(x,u,\md) + \lrisk{\lrn}{\md}{\idx{\beta}{\costidx}} [\hat{\cost}(\fa(z,u,\mdnxt))],
    \)
    where $z = (x, \lrn, \beta, \md) \in \augset$ are fixed quantities and $\mdnxt \sim \row{\transmat}{\md}$.
    We denote by $\DParg (\DR{\cost})(z)$ the corresponding set of minimizers.
    The optimal cost $\hat{V}_N$ of \eqref{eq:risk-averse-OCP} is obtained through the iteration,
    \begin{equation} \label{eq:definition-DP}
        \begin{aligned}
        \hat{\cost}_k &= \T \hat{\cost}_{k-1}, \; \hat{\cost}_0 = \DR{\Vf}, \;k \in \natseq{1}{\hor}.
        \end{aligned}
    \end{equation}
    Similarly, $\DR{\augset}_{k} \dfn \dom \hat{\cost}_{k}$ is given recursively by 
    \begin{equation*}
        \DR{\augset}_k = \left\{z \, \middle|\, \exists u \in \DR{\Ufeas}(z) : 
            \fa(z,u,\mdnxt) \in \DR{\augset}_{k-1}, \,
            \forall \mdnxt \in \W \right\}.
    \end{equation*}

Now consider the stochastic closed-loop system
\begin{equation} \label{eq:closed-loop}
    \begin{aligned} 
    z_{t+1} = \fc(z_t, \md_{t+1}) \dfn \fa(z_t, \DR{\law}_\hor(z_t), \md_{t+1}),
    \end{aligned}
\end{equation}
where $\DR{\law}_{\hor}(z_t) \in \DParg(\hat{\cost}_{N-1})(z_t)$ is an optimal control law obtained by solving the data-driven \RAOCP{} of horizon $\hor$ in receding horizon.

\subsection{Constraint satisfaction and recursive feasibility}

In order to show existence of $\DR{\law}_\hor \in \DParg \DR{\cost}_{\hor-1}$ at every time step, \Cref{prop:recursive-feasibility} will
require that $\DR{\Xf}$ is a robust control invariant set. We define robust control invariance for the augmented control system under consideration as follows.

\begin{definition}[Robust invariance] \label{def:rob-inv}
    A set $\rinv \subseteq \augset$ is \iac{RCI} set for the system \eqref{eq:augmented-dynamics} if
    for all $z \in \rinv$, $\exists u \in \DR{\Ufeas}(z)$ such that $\fa(z,u,\mdnxt)  \in \rinv, \forall \mdnxt \in \W$.
    Similarly, $\rinv$ is \iac{RPI} set for the closed-loop system \eqref{eq:closed-loop} if for all $z \in \rinv$, $\fc(z,\mdnxt) \in \rinv,\, \forall \mdnxt \in \W$.
\end{definition}
Since $\DR{\Ufeas}$ consists of conditional risk constraints, \Cref{def:rob-inv} provides a \ac{DR} counterpart to the notion of \emph{stochastic} robust invariance in \cite{korda_StronglyFeasibleStochastic_2011}.
It is thus less conservative than the more classical notion of 
robust invariance for a set $\rinv_x$, obtained by imposing that $x \in \rinv_x \Rightarrow \exists u: g_i(x,u,\md,\mdnxt) \le 0$, for all $i \in \natseq{1}{\nconst}, \md,\mdnxt \in \W$. In fact, $\rinv_{x} \times \lrnset \times \I \times \W$ is covered by \Cref{def:rob-inv}.

\begin{proposition}[Recursive feasibility] \label{prop:recursive-feasibility}
    If $\DR{\Xf}$ is \iac{RCI} set for $\eqref{eq:augmented-dynamics}$, then \eqref{eq:risk-averse-OCP} is recursively feasible. That is,     
    feasibility of \ac{\RAOCP} \eqref{eq:risk-averse-OCP} for some $z \in \augset$, implies feasibility for $z^+ = \fc(z,\mdnxt)$, for all $\mdnxt \in \W, \hor \in \N_{>0}$.
\end{proposition}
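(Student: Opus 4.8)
The plan is to use the standard MPC recursive-feasibility argument, suitably adapted to the augmented state and the risk-constrained setting: given a feasible policy for the $\hor$-horizon \RAOCP{} at $z$, construct a feasible policy at the successor state $z^+ = \fc(z,\mdnxt)$ by \emph{shifting} the optimal policy and \emph{appending} the terminal control law supplied by robust control invariance of $\DR{\Xf}$. The key object to track is the feasible set $\DR{\augset}_\hor = \dom \hat\cost_\hor$; feasibility of \eqref{eq:risk-averse-OCP} at $z$ is exactly $z \in \DR{\augset}_\hor$, so the claim reduces to showing $\fc(z,\mdnxt) \in \DR{\augset}_\hor$ for every $\mdnxt \in \W$ whenever $z \in \DR{\augset}_\hor$.

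First I would record the invariance of the terminal set under the DP recursion: if $\DR{\Xf}$ is \iac{RCI} set for \eqref{eq:augmented-dynamics}, then $\DR{\Xf} \subseteq \DR{\augset}_1$, and more generally $\DR{\Xf} \subseteq \DR{\augset}_k$ for all $k$, since for any $z \in \DR{\Xf}$ robust invariance provides $u \in \DR{\Ufeas}(z)$ with $\fa(z,u,\mdnxt) \in \DR{\Xf} \subseteq \DR{\augset}_{k-1}$ for all $\mdnxt$; an induction on $k$ gives $\DR{\Xf} \subseteq \DR{\augset}_k$ and moreover the nesting $\DR{\augset}_{k-1} \subseteq \DR{\augset}_k$. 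Consequently $\DR{\Xf} \subseteq \DR{\augset}_{\hor-1}$. Next, I would unfold the one-step relation between $\DR{\augset}_\hor$ and $\DR{\augset}_{\hor-1}$: by definition of $\T$ and of $\DR{\augset}_k$, $z \in \DR{\augset}_\hor$ means there exists $u \in \DR{\Ufeas}(z)$ with $\fa(z,u,\mdnxt) \in \DR{\augset}_{\hor-1}$ for all $\mdnxt \in \W$. Applying $u = \DR{\law}_\hor(z) = \DR\pol_0^\star(z)$, which is precisely such a minimizer, we get $\fc(z,\mdnxt) = \fa(z,\DR{\law}_\hor(z),\mdnxt) \in \DR{\augset}_{\hor-1}$ for all $\mdnxt \in \W$. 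Since $\DR{\augset}_{\hor-1} \subseteq \DR{\augset}_\hor$ by the nesting just established, this yields $\fc(z,\mdnxt) \in \DR{\augset}_\hor$, i.e.\ \eqref{eq:risk-averse-OCP} is feasible at $z^+$, which is the claim.

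For completeness I would spell out the shift-and-append policy construction backing the set inclusion $\DR{\augset}_{\hor-1} \subseteq \DR{\augset}_\hor$ (equivalently $\DR{\Xf} \subseteq \DR{\augset}_1$ propagated forward): given a feasible length-$(\hor-1)$ policy from a state in $\DR{\augset}_{\hor-1}$, prepend it with one arbitrary feasible step and, at the final stage, invoke the \iac{RCI} property of $\DR{\Xf}$ to obtain an admissible terminal input keeping the augmented state in $\DR{\Xf}$, so that $\hat\cost_0 = \DR{\Vf}$ is finite there. One has to check that the augmented dynamics $\fa$ and the constraint map $\DR{\Ufeas}$ depend on $z_k$ only, and that the learner and confidence components $\lrn_{k+1} = \learner(\lrn_k,\md_k,\md_{k+1})$, $\beta_{k+1} = \confdyn(\beta_k)$ evolve autonomously, so that shifting the policy along a realized $\mdnxt$ is well-defined — this is exactly what motivated the augmented-state formulation.

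The only mildly delicate point, and the one I expect to be the main obstacle, is bookkeeping the quantifier ``$\forall \seq{\md}{0}{k} \in \W^k$'' in the constraint \eqref{eq:risk-constraints-OCP} when shifting: one must verify that a policy feasible along every scenario of length $\hor-1$ rooted at the \emph{realized} $\mdnxt$ remains feasible along every scenario of length $\hor$ rooted at $z$, which is immediate because the scenario subtree rooted at $\mdnxt$ is a subtree of the full tree and $\DR{\Ufeas}$ is defined pointwise on augmented states. Apart from this, no genuine difficulty arises: the risk-averse cost never enters the feasibility argument, and the convexity of the ambiguity sets is not needed here — only the structural fact that $\DR{\Ufeas}(z)$ is nonempty on $\DR{\Xf}$ by robust control invariance. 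I would therefore keep the proof short, emphasizing the two inclusions $\DR{\Xf} \subseteq \DR{\augset}_{\hor-1}$ and $\DR{\augset}_{\hor-1} \subseteq \DR{\augset}_\hor$, and the identification of $\DR{\law}_\hor(z)$ with the required minimizer of $\T$.
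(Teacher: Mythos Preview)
Your proposal is correct and follows essentially the same inductive argument as the paper: both establish that the feasibility domains $\DR{\augset}_k$ are robustly invariant under the DP recursion starting from the RCI terminal set, and then observe that the MPC law $\DR{\law}_\hor(z)\in\DParg(\DR{\cost}_{\hor-1})(z)$ is precisely an admissible action steering $z$ into $\DR{\augset}_{\hor-1}$. The only cosmetic difference is that you phrase the induction as the nesting $\DR{\augset}_{k-1}\subseteq\DR{\augset}_k$ (shift-and-append), whereas the paper phrases it as ``$\DR{\augset}_k$ is RCI for each $k$''; these are equivalent and neither buys anything over the other.
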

\ifArxiv
    \begin{proof}
        We first show that if $\DR{\Xf}$ is \ac{RCI}, then so is $\DR{\augset}_\hor$.
        This is done by induction on the horizon $\hor$ of the \ac{OCP}. 
        
        \textbf{Base case ($\hor = 0$).} Trivial, since $\DR{\augset}_0 = \DR{\Xf}$.
        
        \textbf{Induction step ($\hor \Rightarrow \hor +1$).} Suppose that for some $\hor \in \N$, $\DR{\augset}_{\hor}$ is \ac{RCI} for \eqref{eq:augmented-dynamics}. Then, by definition of $\DR{\augset}_{\hor+1}$, there exists for each $z \in \DR{\augset}_{\hor+1}$, a nonempty set $\DR{\Ufeas}_{\hor}^\star(z) \subseteq \DR{\Ufeas}(z)$ such that for every $u \in \DR{\Ufeas}_{\hor}^{\star}(z)$ and for all $\mdnxt\in\W$, $z^+ \in \DR{\augset}_{\hor}$, where $z^+ = \fa(z,u,\mdnxt)$. Furthermore, the induction hypothesis ($\DR{\augset}_{\hor}$ is RCI), implies that there also exists a $u^+ \in \DR{\Ufeas}(z^+)$ such that $\fa(z^+, u^+, \mdnxt^+) \in \DR{\augset}_{\hor}(\mdnxt^+), \forall \mdnxt^+ \in \W$. Therefore, $z^+$ satisfies the conditions defining $\DR{\augset}_{\hor+1}$. In other words, $\DR{\augset}_{\hor+1}$ is RCI. 
        
        The claim follows from the fact that for any $\hor > 0$ and $z \in \DR{\augset}_{\hor}$, $u = \DR{\law}_{\hor}(z) \in \DParg(\DR{\cost}_{\hor-1})(z) \subseteq \DR{\Ufeas}^{\star}_{\hor-1}(z)$, as any other choice of $u$ would yield infinite cost in the definition of the Bellman operator.  
    \end{proof}
\else 
    \Cref{prop:recursive-feasibility} follows from a standard inductive 
    argument. We refer to the technical report \cite{schuurmans_LearningBasedDistributionallyRobust_2020} for the proof.
\fi

    \begin{cor}[Chance constraint satisfaction] \label{cor:constraint-satisfaction}
        If \Cref{prop:recursive-feasibility} holds, then by \Cref{prop:conditions-confidences}, the stochastic process $(z_t)_{t \in \N} = (x_t, \lrn_t, \beta_t, \md_t)_{t\in\N}$ satisfying dynamics \eqref{eq:closed-loop} satisfies the nominal chance constraints 
        \[ \prob[g_{i}(x_t, \DR{\law}_{\hor}(z_t), \md_{t+1})>0 \mid x_t, \md_t] < \alpha_i, \]
        \as, for all $i \in \natseq{1}{\nconst}, t \in \N$.
    \end{cor}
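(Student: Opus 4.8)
The plan is to chain \Cref{prop:recursive-feasibility} and \Cref{prop:conditions-confidences} along the closed-loop trajectory; the corollary is essentially a bookkeeping exercise. First I would invoke \Cref{prop:recursive-feasibility}: assuming the \ac{\RAOCP}~\eqref{eq:risk-averse-OCP} is feasible at the initial augmented state $z_0$, recursive feasibility propagates it along the closed loop~\eqref{eq:closed-loop}, so that \eqref{eq:risk-averse-OCP} is feasible at $z_t$ for every $t \in \N$. Consequently the minimizing policy $(\DR{\pol}_k^\star(z_t))_{k=0}^{\hor-1} \in \DR{\Pi}_\hor(z_t)$ exists, and its first element --- the \ac{MPC} law $\DR{\law}_{\hor}(z_t) = \DR{\pol}_0^\star(z_t)$ of~\eqref{eq:MPC-law-DR} --- satisfies the stage-$0$ instance of the feasibility constraint~\eqref{eq:risk-constraints-OCP}, namely $\DR{\law}_{\hor}(z_t) \in \DR{\Ufeas}(z_t)$.

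Next I would unpack this membership through the definition~\eqref{eq:DR-constraints}: it is equivalent to
\begin{equation*}
    \lriskc{\lrn_t}{\md_t}{\idx{\beta_{t}}{i}}{\DR{\alpha}_i}\big[ g_i(x_t, \DR{\law}_{\hor}(z_t), \md_t, \mdnxt) \big] \leq 0, \quad \forall i \in \natseq{1}{\nconst},
\end{equation*}
and since the previous step holds for every reachable $z_t$ (in fact deterministically, for every $t$, once $z_0$ is feasible), this inequality holds \as. Then, for each fixed $i \in \natseq{1}{\nconst}$ and $t \in \N$, I would apply \Cref{prop:conditions-confidences} with $\alpha := \alpha_i$, $\beta := \idx{\beta_{t}}{i}$, $\DR{\alpha} := \DR{\alpha}_i = \tfrac{\alpha_i - \idx{\beta_{t}}{i}}{1-\idx{\beta_{t}}{i}}$ (a legitimate choice under the standing requirement $\idx{\beta_{t}}{i} \le \alpha_i$ discussed in the remark following~\eqref{eq:DR-constraints}), the random learner state $\lrn_t$ (which satisfies~\eqref{eq:high-confidence}), the mode $\md_t$, and the function $g(z,\mdnxt) \dfn g_i(x, \DR{\law}_{\hor}(z), \md, \mdnxt)$ on $\augset \times \W$, where $z = (x,\lrn,\beta,\md)$. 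The hypothesis of implication~\eqref{eq:implication-prop3-1} is exactly the displayed inequality, so its conclusion yields $\prob[g_i(x_t, \DR{\law}_{\hor}(z_t), \md_t, \md_{t+1}) \le 0 \mid x_t, \md_t] \ge 1-\alpha_i$, \as; complementing gives the bound for this $i$ and $t$, and since both are arbitrary, for all of them. The strict inequality in the statement can be recovered, e.g., by taking $\DR{\alpha}_i$ strictly below the bound of \Cref{prop:conditions-confidences} whenever $\idx{\beta_{t}}{i} < \alpha_i$.

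The one point that deserves care --- rather than a genuine obstacle --- is the \emph{a priori} character of \Cref{prop:conditions-confidences} stressed in the discussion after its proof: the bound on the conditional violation probability is obtained only after averaging over the randomness of $\lrn_t$, and for an individual realized trajectory it may fail on the low-probability event $\{\row{\transmat}{\md_t} \notin \amb_{\idx{\beta_{t}}{i}}(\lrn_t, \md_t)\}$. This is precisely what necessitates the ``\as'' qualifier in the corollary and what the tightening $\DR{\alpha}_i \le \alpha_i$ compensates for. A secondary point to verify is that substituting the closed-loop feedback $\DR{\law}_{\hor}$ into the generic argument $g$ of \Cref{prop:conditions-confidences} is admissible, which is immediate since $\DR{\law}_{\hor}: \augset \to \Re^{\na}$ and $g_i: \Re^{\ns}\times\Re^{\na}\times\W^2\to\Re$ compose to a well-defined function on $\augset\times\W$.
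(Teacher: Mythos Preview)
Your proposal is correct and follows exactly the route the paper intends: the corollary is stated in the paper without a separate proof, as an immediate consequence of chaining \Cref{prop:recursive-feasibility} (which guarantees $\DR{\law}_{\hor}(z_t)\in\DR{\Ufeas}(z_t)$ along the closed loop) with \Cref{prop:conditions-confidences} applied to each constraint $g_i$. Your unpacking of \eqref{eq:DR-constraints}, the instantiation of \Cref{prop:conditions-confidences}, and the remark on recovering the strict inequality are all appropriate elaborations of what the paper leaves implicit.
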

    We conclude this section by emphasizing that although 
    the MPC scheme guarantees closed-loop constraint satisfaction, it does so while being less conservative than a fully robust approach,
    which is recovered by taking 
    $\amb_{\beta}(s,\md) = \simplex_{\nModes}$ for all $s,\md,\beta$. It is apparent from \Cref{eq:ambiguous-chance-constraint,eq:DR-constraints}, that for all other choices of the ambiguity set, the set of feasible control actions will be larger (in the sense of set inclusion).

\subsection{Stability} \label{sec:stability}
We will now provide sufficient conditions on the control setup under which the origin is \ac{MSS} for \eqref{eq:closed-loop}, i.e., $\lim_{t \to \infty} \E[\nrm{x_{t}}^2] = 0$ for all $x_0$ in some specified compact set containing the origin.

Our main stability result, stated in \Cref{thm:MPC-stability},
hinges in large on the following \namecref{lem:DR-MSS}, which relates \emph{risk-square stability} {\cite[Lem. 5]{sopasakis_RiskaverseModelPredictive_2019}} of the origin for the autonomous system \eqref{eq:closed-loop} to stability in the mean-square sense, based on the statistical properties of the ambiguity sets.

\begin{lem}[\ac{DR} \ac{MSS} condition] \label{lem:DR-MSS}
    Suppose that \Cref{assum:confidences} holds and that there exists a nonnegative, proper function $\cost: \augset \rightarrow \barre_{+}$, such that 
    \begin{conditions*}
        \item \label{cond:invariance} $\dom V \subseteq \Xfeas \times \lrnset \times \I \times \W$ is \ac{RPI} for system \eqref{eq:closed-loop}, where $\Xfeas \subset \Re^{\ns}$ is a compact set containing the origin;
        \item \label{cond:lyap-decrease} $\lrisk{\lrn}{\md}{\idx{\beta}{\costidx}}[\cost(\fc(z,\mdnxt), \mdnxt)]-\cost(z) \leq - c \nrm{x}^2$, for some $c>0$,
        for all $z \in \dom \cost$;
        \item \label{cond:boundedness} $\cost$ is uniformly bounded on its domain, i.e., there exists a $\bar \cost \geq 0$, such that 
        $\cost(z) \leq \bar \cost$ for all $z \in \dom \cost$.
    \end{conditions*}
    Then, $\lim_{t \to \infty}\E[ \nrm{x_t}^2] = 0$ 
    for all $z_0 \in \dom \cost$, where $(z_t)_{t\in\N} = (x_t, \lrn_t, \beta_t, \md_t)_{t\in\N}$ is the stochastic process governed by dynamics \eqref{eq:closed-loop}.   
\end{lem}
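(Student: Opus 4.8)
The plan is to convert the distributionally-robust (risk) decrease condition \ref{cond:lyap-decrease} into an ordinary (nominal) supermartingale-type inequality, and then bound the accumulated defect using \Cref{assum:confidences} via a Borel--Cantelli / summability argument combined with the uniform bound \ref{cond:boundedness}. The key observation is that whenever the event $E_t \dfn \{\row{\transmat}{\md_t} \in \amb_{\idx{\beta_t}{\costidx}}(\lrn_t,\md_t)\}$ occurs, the ambiguous expectation dominates the true conditional expectation, i.e.
\[
    \E[\cost(z_{t+1},\md_{t+1}) \mid \F_t] \leq \lrisk{\lrn_t}{\md_t}{\idx{\beta_t}{\costidx}}[\cost(\fc(z_t,\mdnxt),\mdnxt)] \leq \cost(z_t) - c\nrm{x_t}^2 ,
\]
so on $E_t$ we get a genuine nominal decrease. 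On the complementary event $E_t^c$, which by \eqref{eq:high-confidence} has probability at most $\idx{\beta_t}{\costidx}$, we only have the crude bound $\E[\cost(z_{t+1},\md_{t+1})\mid\F_t] \leq \bar\cost$ (using condition \ref{cond:boundedness} and RPI-ness of $\dom\cost$ from \ref{cond:invariance}, which guarantees $z_{t+1}\in\dom\cost$ a.s.\ so the quantities are well defined and finite).

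First I would make this split rigorous: writing $V_t \dfn \cost(z_t,\md_t)$ (or $\cost(z_t)$ — I will use whichever the closed-loop notation demands), I would condition on $\F_t$ and decompose over $E_t$ and $E_t^c$ to obtain
\[
    \E[V_{t+1}\mid\F_t] \;\leq\; V_t - c\nrm{x_t}^2 + \bar\cost\, 1_{E_t^c}.
\]
Taking total expectations and telescoping from $0$ to $T-1$ gives
\[
    \textstyle c\sum_{t=0}^{T-1}\E[\nrm{x_t}^2] \;\leq\; \E[V_0] - \E[V_T] + \bar\cost\sum_{t=0}^{T-1}\prob[E_t^c] \;\leq\; \bar\cost + \bar\cost\sum_{t=0}^{T-1}\idx{\beta_t}{\costidx},
\]
where I used $V_T \ge 0$, $V_0 \le \bar\cost$, and $\prob[E_t^c]\le\idx{\beta_t}{\costidx}$. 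By \Cref{assum:confidences} the series $\sum_t \idx{\beta_t}{\costidx}$ converges, so the right-hand side is bounded uniformly in $T$; hence $\sum_{t=0}^{\infty}\E[\nrm{x_t}^2] < \infty$, which in particular forces $\E[\nrm{x_t}^2]\to 0$ as $t\to\infty$. This holds for every $z_0\in\dom\cost$ since $V_0 = \cost(z_0) \le \bar\cost < \infty$ there.

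The main obstacle I anticipate is the measurability/conditioning bookkeeping around the event $E_t$: $\amb_{\idx{\beta_t}{\costidx}}(\lrn_t,\md_t)$ is $\F_t$-measurable (it is a function of the learner state and current mode, which are part of $z_t$), so $E_t$ is $\F_t$-measurable and the inner decomposition is legitimate — but one must be careful that the bound \eqref{eq:high-confidence} is an \emph{a priori} statement (as emphasized after \Cref{prop:conditions-confidences}), so it yields $\prob[E_t^c]\le\idx{\beta_t}{\costidx}$ as an unconditional probability, which is exactly what the telescoped bound needs, rather than a conditional one. A secondary point is ensuring $z_{t+1}\in\dom\cost$ almost surely so that condition \ref{cond:lyap-decrease} is applicable at every step along the closed-loop trajectory: this is precisely what RPI-ness of $\dom\cost$ in \ref{cond:invariance} provides, and it should be invoked explicitly at the start of the induction. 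Everything else is a routine telescoping argument; no further structural input (e.g.\ on the dynamics $f$) is required because the Lyapunov function $\cost$ is assumed given.
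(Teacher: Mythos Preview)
Your approach is essentially the same as the paper's: define the event $E_t=\{\row{\transmat}{\md_t}\in\amb_{\idx{\beta_t}{\costidx}}(\lrn_t,\md_t)\}$, use the risk-decrease condition on $E_t$ and the uniform bound $\bar{\cost}$ on $E_t^c$, telescope, and invoke summability of $(\idx{\beta_t}{\costidx})_{t\in\N}$. One step deserves an extra word, however: your combined inequality
\[
\E[V_{t+1}\mid\F_t]\;\le\; V_t-c\nrm{x_t}^2+\bar{\cost}\,1_{E_t^c}
\]
does not follow immediately from the two separate cases. On $E_t^c$ you only have $\E[V_{t+1}\mid\F_t]\le\bar{\cost}$, while the right-hand side equals $V_t-c\nrm{x_t}^2+\bar{\cost}$; this dominates $\bar{\cost}$ only if $V_t\ge c\nrm{x_t}^2$. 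That inequality does hold---it follows from condition~\ref{cond:lyap-decrease} together with nonnegativity of $\cost$, since the ambiguous expectation of a nonnegative quantity is nonnegative---but you should state it explicitly. Once you do, your argument is in fact slightly cleaner than the paper's: the paper carries an extra residual term $c\,\E[\nrm{x_t}^2\mid E_t^c]\,\prob[E_t^c]$ through the telescoping and bounds it at the end using compactness of $\Xfeas$ from condition~\ref{cond:invariance}, whereas your route makes that appeal to compactness unnecessary.
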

\begin{proof}     
    See \Cref{proof:lem:DR-MSS}.
\end{proof}

\begin{thm}[\acs{MPC} stability] \label{thm:MPC-stability}
    Suppose that \Cref{assum:confidences} is satisfied, 
    and let $\bar{\augset} \dfn \bar{\Xfeas} \times \lrnset \times \I \times \W \subseteq \dom \DR{\cost}_{\hor}$ be an \ac{RPI} set for \eqref{eq:closed-loop}, where $\bar{\Xfeas} \subseteq \Re^{\ns}$ is a compact set containing the origin.
    Assume furthermore that the following statements hold:
    \begin{conditions*}
        \item \label{cond:TVfleqVf} $\T \DR{\Vf} \leq \DR{\Vf}$;
        \item \label{cond:stage-cost-bound} $c \nrm{x}^2 \leq \ell(x,u,\md)$
              for some $c > 0$
              for all
              $z = (x,\md,\beta,\lrn) \in \dom \DR{\cost}_\hor$ and all 
              $u \in \DR{\Ufeas}(z)$.
        \item \label{cond:local-bounded} $\DR{\cost}_\hor$ is locally bounded in $\bar \augset$.
    \end{conditions*}
    Then, the origin is \ac{MSS} for the \ac{MPC}-controlled system \eqref{eq:closed-loop}, for all initial states $z_0 \in \bar \augset$.
\end{thm}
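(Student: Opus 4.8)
The plan is to derive \Cref{thm:MPC-stability} from \Cref{lem:DR-MSS} applied to the candidate Lyapunov function $\cost \dfn \DR{\cost}_{\hor} + \delta_{\bar{\augset}}$, i.e., the optimal cost $\DR{\cost}_{\hor}$ of the horizon-$\hor$ \RAOCP{}~\eqref{eq:risk-averse-OCP}, restricted to $\bar{\augset}$. Since $\bar{\augset} \subseteq \dom \DR{\cost}_{\hor}$, we have $\dom \cost = \bar{\augset}$; since $\ell$ and $\DR{\Vf}$ are nonnegative, $\cost \geq 0$; and $\cost$ is proper because $\bar{\augset} \neq \emptyset$. With this choice, condition~\ref{cond:invariance} of \Cref{lem:DR-MSS} holds verbatim: $\dom \cost = \bar{\augset} = \bar{\Xfeas} \times \lrnset \times \I \times \W$ with $\bar{\Xfeas}$ compact and containing the origin, and $\bar{\augset}$ is \ac{RPI} for \eqref{eq:closed-loop} by hypothesis. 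It thus remains to verify the decrease property~\ref{cond:lyap-decrease} and the uniform bound~\ref{cond:boundedness}.

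For the decrease, I would first show by induction on $k$ that the value iterates \eqref{eq:definition-DP} are non-increasing, $\DR{\cost}_{k} \geq \DR{\cost}_{k+1}$: the base case $\DR{\cost}_{0} = \DR{\Vf} \geq \T \DR{\Vf} = \DR{\cost}_{1}$ is exactly hypothesis~\ref{cond:TVfleqVf}, and the inductive step uses monotonicity of the Bellman operator $\T$ (inherited from monotonicity of the coherent risk mapping $\lrisk{\lrn}{\md}{\idx{\beta}{\costidx}}$ and of $\ell$, together with the inner minimization being over the fixed set $\DR{\Ufeas}(z)$). In particular $\DR{\cost}_{\hor-1} \geq \DR{\cost}_{\hor}$. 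Since the \ac{MPC} law satisfies $\DR{\law}_{\hor}(z) \in \DParg(\DR{\cost}_{\hor-1})(z)$ and $\T\DR{\cost}_{\hor-1} = \DR{\cost}_{\hor}$, for every $z = (x,\lrn,\beta,\md) \in \dom \DR{\cost}_{\hor}$ we obtain the identity $\DR{\cost}_{\hor}(z) = \ell(x,\DR{\law}_{\hor}(z),\md) + \lrisk{\lrn}{\md}{\idx{\beta}{\costidx}}[\DR{\cost}_{\hor-1}(\fc(z,\mdnxt))]$. Replacing $\DR{\cost}_{\hor-1}$ by the smaller $\DR{\cost}_{\hor}$ under the monotone operator $\lrisk{\lrn}{\md}{\idx{\beta}{\costidx}}$, and using $\ell(x,\DR{\law}_{\hor}(z),\md) \geq c\nrm{x}^2$ (hypothesis~\ref{cond:stage-cost-bound}, since $\DR{\law}_{\hor}(z) \in \DR{\Ufeas}(z)$), yields $\lrisk{\lrn}{\md}{\idx{\beta}{\costidx}}[\DR{\cost}_{\hor}(\fc(z,\mdnxt))] - \DR{\cost}_{\hor}(z) \leq -c\nrm{x}^2$. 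As $\bar{\augset}$ is \ac{RPI}, $\fc(z,\mdnxt) \in \bar{\augset}$ whenever $z \in \bar{\augset}$, so $\cost$ coincides with $\DR{\cost}_{\hor}$ at $z$ and at all its successors; hence~\ref{cond:lyap-decrease} holds on $\dom \cost = \bar{\augset}$.

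The remaining — and, I expect, the principal — obstacle is condition~\ref{cond:boundedness}: a \emph{uniform} bound $\cost \leq \bar{\cost}$ over $\bar{\augset}$. Hypothesis~\ref{cond:local-bounded} only supplies \emph{local} boundedness of $\DR{\cost}_{\hor}$; this upgrades to a uniform bound once $\DR{\cost}_{\hor}$ is shown to be bounded uniformly in the learner coordinate $\lrn \in \lrnset$ for each fixed $(x,\beta,\md)$, the other coordinates ranging over the compact set $\bar{\Xfeas} \times \I$ and the finite set $\W$. This step is non-trivial precisely because $\lrnset$ need not be compact --- e.g.\ the transition-count statistic of \Cref{ex:example-ambiguity-choice}. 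A natural route is to observe that $\lrn$ enters $\DR{\cost}_{\hor}$ only through the convex ambiguity sets $\amb_{\idx{\beta}{i}}(\lrn,\md) \subseteq \simplex_{\nModes}$, and that replacing each of them by the full simplex simultaneously inflates every risk operator in the cost and deflates every ambiguous feasible set, hence over-estimates $\DR{\cost}_{\hor}$ by the corresponding (learner-independent) fully robust cost, which is finite and bounded over the compact set of robustly feasible states. Once~\ref{cond:invariance}--\ref{cond:boundedness} of \Cref{lem:DR-MSS} are in place, the lemma together with \Cref{assum:confidences} gives $\lim_{t \to \infty} \E[\nrm{x_t}^2] = 0$ for all $z_0 \in \bar{\augset}$, which is \ac{MSS} of the origin for \eqref{eq:closed-loop}.
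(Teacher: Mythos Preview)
Your argument is essentially the paper's: take $\cost=\DR{\cost}_{\hor}+\delta_{\bar{\augset}}$, use \ref{cond:TVfleqVf} and monotonicity of $\T$ to get $\DR{\cost}_{\hor-1}\ge\DR{\cost}_{\hor}$, combine the Bellman identity $\DR{\cost}_{\hor}(z)=\ell(x,\DR{\law}_{\hor}(z),\md)+\lrisk{\lrn}{\md}{\idx{\beta}{\costidx}}[\DR{\cost}_{\hor-1}(\fc(z,\mdnxt))]$ with \ref{cond:stage-cost-bound} to obtain the decrease \ref{cond:lyap-decrease}, and then invoke \Cref{lem:DR-MSS}. The paper's proof stops there, treating hypothesis \ref{cond:local-bounded} as directly furnishing condition~\ref{cond:boundedness} of the lemma without further comment.

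Where you diverge is in your treatment of \ref{cond:boundedness}. You are right that, read literally, ``locally bounded on $\bar{\augset}$'' does not yield a global bound when $\lrnset$ is non-compact; the paper glosses over this and effectively uses \ref{cond:local-bounded} as a standing uniform-boundedness assumption. Your proposed remedy, however, does not close the gap either: enlarging every ambiguity set to $\simplex_{\nModes}$ does bound the risk operators in the objective from above, but it simultaneously \emph{shrinks} the feasible sets $\DR{\Ufeas}(z)$, so the fully robust value function is finite only on the robustly feasible domain, which need not contain $\bar{\Xfeas}$. Hence your upper bound can be $+\infty$ on parts of $\bar{\augset}$, and the argument collapses unless you add the (unstated) assumption that $\bar{\Xfeas}$ lies inside the robust feasibility region. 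In short: your decrease argument matches the paper's; for boundedness, the paper simply assumes what is needed, and your attempted strengthening requires an extra hypothesis to be valid.
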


\begin{proof}
    The proof is along the lines of that of \cite[thm. 6]{sopasakis_RiskaverseModelPredictive_2019} and shows that $\hat{\cost}_\hor$ satisfies the conditions of \Cref{lem:DR-MSS}. Details are in the \Cref{proof:thm:MPC-stability}.
\end{proof}

\section{Conclusions}
We presented a distributionally robust \ac{MPC} strategy for Markovian switching systems with unknown transition probabilities subject to general chance constraints. 
using data-driven, high-confidence ambiguity sets, we derive a \ac{DR} counterpart to a nominal stochastic MPC scheme.
We show that the resulting scheme provides a priori guarantees on closed-loop constraint satisfaction and mean-square stability of the true system, without requiring explicit knowledge of the transition probabilities.

\bibliographystyle{ieeetr}
\bibliography{references}
\vspace{-0.5em}
\begin{appendix}
  \vspace{-0.5em}

\begin{appendixproof}{lem:DR-MSS}
    Let $(z_t)_{t\in\N}=(x_t, \lrn_t, \beta_t, \md_t)_{t\in\N}$ denote 
    the stochastic process satisfying dynamics \eqref{eq:closed-loop}, for some initial state $z_0 \in \dom \cost$. For ease of notation, let us define $\cost_t \dfn \cost(z_t), t \in \N$. 
    Due to nonnegativity of $V$, 
    \[
        \begin{aligned}
            \E\left[\tsum_{t=0}^{k-1} c \nrm{x_t}^2\right]
            &\leq \E\left[V_k + \tsum_{t=0}^{k-1} c \nrm{x_t}^2\right]\\
            & =\E\left[V_k - V_0 + \tsum_{t=0}^{k-1} c \nrm{x_t}^2\right] + V_0,
        \end{aligned}
    \]
    where the second equality follows from the fact that $\cost_0$ is deterministic. By linearity of the expectation, we can in turn write
    \[
    \begin{aligned}
        \E\big[V_k{-}V_0{+}c \tsum_{t=0}^{k-1} \nrm{x_t}^2 &\big] 
                = \E\left[\tsum_{t=0}^{k-1} V_{t+1}{-}V_{t} {+} c \nrm{x_t}^2 \right]\\  
                &= \tsum_{t=0}^{k-1} \E \left[ V_{t+1} {-} V_t{+} c \nrm{x_t}^2 \right].
    \end{aligned}
    \]
    Therefore,
    \begin{equation} \label{eq:step-proof}
        \begin{aligned}
            \E\big[c \tsum_{t=0}^{k-1} \nrm{x_t}^2\big]&{-}V_0
            \leq
            \tsum_{t=0}^{k-1} \E \left[V_{t+1}{-}V_t \right]{+}c \E \left[ \nrm{x_t}^2 \right].
        \end{aligned}
    \end{equation}
    Recall that $\idx{\beta}{\costidx}$ denotes the coordinate of $\beta$ corresponding to the risk measures in the cost function \eqref{eq:cost-function-risk}. Defining the event
    \(
        E_t \dfn \{
                \omega \in \Omega \mid
                    \row{\transmat}{\md_t(\omega)} \in
                    \amb_{\idx{\beta_t}{\costidx}}(\lrn_t(\omega), \md_t(\omega))
            \},
    \)
    and its complement $\lnot E_t = \Omega \setminus E_t$, we can use the law of total expectation to write 
    \begin{multline*}
        \E\left[ V_{t+1} - V_t \right]
        = \E\left[ V_{t+1} - V_{t} \mid E_t \right] {\prob[E_t]} \\
        +\E\left[ V_{t+1} - V_{t} \mid \lnot E_t \right] {\prob[\lnot E_t]}.
    \end{multline*}
    By condition \eqref{eq:high-confidence}, $\prob[\lnot E_t] < \idx{\beta_{t}}{\costidx}$. From condition \ref{cond:invariance}, it follows that $z_t \in \dom \cost$, $\forall t \in \natseq{0}{k}$ and that there exists a $\dV \geq 0$ such that $V(z) \leq \dV$, for all $z \in \dom V$. Therefore, $\E[V_{t+1} - V_t \mid \lnot \E_t] \leq \dV$.
    Finally, by condition \ref{cond:lyap-decrease}, 
    \(
        \E\left[ V_{t+1}- V_{t} \mid E_t \right] 
        \leq \E[-c \nrm{x_t}^2].
    \) Thus,
    \extrastep{
        This follows from explicitly writing out the expectation 
        \[ 
            \begin{aligned}
                \E[V_{t+1} - V_t] &= \E[\smashoverbracket{\E[V_{t+1} \mid V_t] }{\leq V_t - c \nrm{x}^2} ] - \E[V_t] \\
                &\leq \E[V_t - c \nrm{x}^2] - \E[V_t] \\
                &= - c \nrm{x}^2 
            \end{aligned}
        \]
    }
    \begin{align*}
        \E\left[ V_{t+1} - V_t \right] 
        &\leq \E\left[-c \nrm{x_t}^2 \mid E_t\right] \prob[E_t] + \dV \idx{\beta_{t}}{\costidx}.
    \end{align*}
    This allows us to simplify expression \eqref{eq:step-proof} as
    \begin{align*}
        &\E\left[c \tsum_{t=0}^{k-1} \nrm{x_t}^2\right] -V_0\\
        &\leq \tsum_{t=0}^{k-1}-c \E\left[\nrm{x_t}^2 \mid E_t\right] \prob[E_t] + \dV \idx{\beta_{t}}{\costidx} 
        + c \E \left[ \nrm{x_t}^2 \right]\\  
        &\leq \tsum_{t=0}^{k-1} -c \E\left[\nrm{x_t}^2 \mid E_t\right] \prob[E_t] + \dV \idx{\beta_{t}}{\costidx}     
        + \\ 
        &\qquad c \E \left[ \nrm{x_t}^2 \mid E_t \right] \prob[E_t] 
         +c \E \left[ \nrm{x_t}^2 \mid \lnot E_t \right] \prob[\lnot E_t]\\  
        &= \tsum_{t=0}^{k-1} \dV \idx{\beta_{t}}{\costidx} +c \E \left[ \nrm{x_t}^2 \mid \lnot E_t \right] \prob[\lnot E_t]\\
        &\leq \tsum_{t=0}^{k-1} \idx{\beta_{t}}{\costidx} (\dV +  c \E\left[\nrm{x_t}^2 \mid \lnot E_t \right] ).
    \end{align*}
    Since $x_t \in \Xfeas, t \in \N$, and $\Xfeas$ is a compact set containing the origin, there exists an 
    $r \geq 0$ such that $\nrm{x}^{2} \leq r$. Therefore,   
    \begin{align*}
        \E\left[\tsum_{t=0}^{k-1} \nrm{x_t}^2\right]
        &\leq \tfrac{V_0}{c} + \left(\tfrac{\dV}{c} + r \right) \tsum_{t=0}^{k-1} \idx{\beta_{t}}{\costidx},
    \end{align*}
    which remains finite as $k \to \infty$, since $(\idx{\beta_{t}}{\costidx})_{t \in \N}$ is summable.
    Thus, necessarily, $\lim_{t\to \infty} \E[\nrm{x_t}^2] = 0$.
\end{appendixproof}
\begin{appendixproof}{thm:MPC-stability}
    First, note that using the monotonicity of coherent risk measures \cite[Sec. 6.3, (R2)]{shapiro2009lectures}, a straightforward inductive argument allows us to show that 
    under \Cref{cond:TVfleqVf},
    \begin{equation} \label{eq:monotonicity-T}
        \T \DR{V}_\hor \leq \DR{V}_\hor, \quad \forall \hor \in \N.
    \end{equation}
    Since $\bar{\mathcal{Z}} \subseteq \dom \DR{\cost}_\hor$, recall that by definition \eqref{eq:definition-DP}, we have for any $z \in \bar{\mathcal{Z}}$ that
    \begin{multline*}
        \hat{\cost}_N(z) = \ell(x,\DR{\law}_\hor(z), \md) 
        + \lrisk{\md}{\lrn}{\beta}\big[
            \hat{\cost}_{N-1}
            \big(
                \fc(z,\mdnxt)
            \big)
        \big].
    \end{multline*} 
    Therefore, we may write   
    \begin{align*}
        &\lrisk{\md}{\lrn}{\beta}\left[
            \hat{\cost}_\hor(\fc(z,\mdnxt))
        \right] - 
        \hat{\cost}_\hor(z)  \\ 
        &=\lrisk{\md}{\lrn}{\beta}\left[
            \hat{\cost}_\hor(\fc(z,\mdnxt))
        \right] 
        - \ell(x,\DR{\law}_\hor(z), \md) 
        \\ &\; - \lrisk{\md}{\lrn}{\beta}
        \big[
        \hat{\cost}_{N-1}
        \big(
            \fc(z,\mdnxt)
        \big)
        \big]
        \leq - \ell (x, \DR{\law}_\hor(z), \md) \leq - c \nrm{x}^2,
    \end{align*}
    where the first inequality follows by \eqref{eq:monotonicity-T} and monotonicity of coherent risk measures. The second inequality follows from \Cref{cond:stage-cost-bound}.
    Therefore, $\cost: z \to \DR{\cost}_\hor(z) + \delta_{\bar{\mathcal{Z}}}(z)$ satisfies the conditions of \Cref{lem:DR-MSS} and the assertion follows. 
\end{appendixproof}

  \end{appendix}
\end{document}